\newtheorem{theorem}{Theorem}
\newtheorem{lemma}[theorem]{Lemma}
\newtheorem{prop}[theorem]{Proposition}
\theoremstyle{definition}
\newtheorem{defin}[theorem]{Definition}
\newtheorem{fact}[theorem]{Fact}
\newtheorem{que}[theorem]{Question}
\theoremstyle{remark}
\newtheorem*{rem}{Remark}
\newtheorem*{claim}{Claim}
\newcommand{\fin}[1]{\mathrm{Fin}(#1)}
\renewcommand{\phi}{\varphi}
\newcommand{\aut}[1]{\mathrm{Aut}(#1)}
\newcommand{\im}[1]{\mathrm{Im}(#1)}
\begin{document}
	\title{Minimal flows with arbitrary centralizer}
	\author{Andy Zucker}
	\date{September 2019}
	\maketitle
	
\begin{abstract}
	Given any pair of countable groups $G$ and $H$ with $G$ infinite, we construct a minimal, free, Cantor $G$-flow $X$ so that $H$ embeds into the group of automorphisms of $X$. This generalizes results of \cite{CP} and \cite{GTWZ}.
	\let\thefootnote\relax\footnote{2010 Mathematics Subject Classification. Primary: 37B05}
	\let\thefootnote\relax\footnote{The author was supported by NSF Grant no.\ DMS 1803489.}
\end{abstract}
	
Let $G$ be an infinite countable group, and let $X$ be a $G$-flow, i.e.\ a compact Hausdorff space equipped with a continuous $G$-action. A $G$-flow is called \emph{minimal} if every orbit is dense. The \emph{centralizer} of $X$ is the group of all homeomorphisms of $X$ which commute with the $G$-action. We denote this group by $\aut{X}$ when the action is understood.

The study of the centralizers of $G$-flows has been an active area of research, especially in the case $G = \mathbb{Z}$. Usually some constraint is placed upon the flows $X$ under consideration, for instance by demanding that $X$ is a subshift over a finite alphabet (see for instance \cite{BLR}, \cite{CK}, \cite{DDMP}, \cite{FST}, and \cite{Hed}). More recently, interest has turned to just considering minimality with no other constraints. Namely, does only the knowledge that $X$ is a minimal $G$-flow place any algebraic constraints on the possible groups that can appear as $\aut{X}$? 

A natural constraint to place on $X$ is that the underlying space of $X$ be the Cantor space. We call $G$-flows with this property \emph{Cantor} flows. This is not much of a constraint at all, since every countable group can act freely on Cantor space \cite{HM}. Cortez and Petite in \cite{CP} construct for every residually finite countable group $H$ a minimal Cantor $\mathbb{Z}$-flow $X$ such that $H$ embeds into $\aut{X}$. Independently and using different techniques, Glasner, Tsankov, Weiss and Zucker in \cite{GTWZ} construct for any countable group $G$ and any countable group $H$ which embeds into a compact group a free, minimal, Cantor $G$-flow $X$ for which $H$ embeds into $\aut{X}$. Recall that the $G$-flow $X$ is \emph{free} if for every $g\in G\setminus \{1_G\}$ and $x\in X$, we have $gx\neq x$.

The goal of this paper is to prove the following theorem.

\begin{theorem}
	\label{Thm:Main}
	Let $G$ and $H$ be any countable groups with $G$ infinite. Then there is a minimal, free, Cantor $G$-flow $X$ so that $H$ embeds into $\aut{X}$.
\end{theorem}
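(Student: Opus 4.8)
The plan is to realize $X$ as a subshift over the product group $\Gamma:=G\times H$. Fix a finite alphabet $F$ with $|F|\ge 2$, and equip $F^{\Gamma}$ with the two commuting shift actions $(g\cdot x)(g',h')=x(g^{-1}g',h')$ and $(h\cdot x)(g',h')=x(g',h'h)$, for $g,g'\in G$ and $h,h'\in H$. Since the $H$-shift commutes with the $G$-shift, any closed subset $X\subseteq F^{\Gamma}$ that is invariant under both shifts carries a canonical homomorphism $H\to\aut{X}$, taking $h$ to the $H$-shift by $h$ restricted to $X$. Hence it is enough to build such an $X$ which is (i) minimal as a $G$-flow, (ii) free as a $G$-flow, and (iii) such that the $H$-shift acts faithfully on $X$. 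Note that (ii) together with the infinitude of $G$ forces $X$ to be infinite, and an infinite minimal flow has no isolated points; as $X$ is closed in the Cantor set $F^{\Gamma}$, it is then automatically a Cantor space, so the ``Cantor'' clause needs no further attention.

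I would obtain $X$ as the $G$-orbit closure $X=\overline{G\cdot\omega}$ of a single configuration $\omega\colon\Gamma\to F$, built by recursion over an exhaustion of $\Gamma$ by finite windows (equivalently, as the Fra\"iss\'e limit of a suitable category of finite partial patterns, or via a Baire-category argument). Fixing enumerations of $G$ and of $H$, one builds $\omega$ as an increasing union of finite partial patterns, discharging at successive stages the following families of requirements. \emph{(R1)}: every pattern that occurs in $\omega$ on a finite window recurs with syndetic return set in $G$; by the standard characterization of minimal points this makes $\overline{G\cdot\omega}$ minimal as a $G$-flow. \emph{(R2)}: for each $h\in H$, every finite-window pattern of the shifted configuration $h\cdot\omega$ also occurs in $\omega$ at some $G$-translate of that window; this makes $X$ invariant under the $H$-shift, hence under all of $\Gamma$. \emph{(R3)}: for each $g\in G\setminus\{1_G\}$ there is a finite window $W\ni 1_\Gamma$ on which $\omega$ and its $g$-shift disagree at every placement in $G$; since the patterns occurring in $X$ are exactly those occurring in $\omega$, this makes the $G$-action on $X$ free. \emph{(R4)}: $\omega$ is not invariant under any nontrivial $H$-shift, which makes $H\to\aut{X}$ injective. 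Each individual requirement is met by a single extension step: a finite partial pattern that has so far respected all commitments can be enlarged so as to realize one more of the recurrences demanded by (R1) or (R2) while preserving the asymmetries recorded for (R3) and (R4).

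The crux, and the point at which one must go beyond \cite{CP} and \cite{GTWZ}, is this extension step — in particular the interaction of (R1)/(R2) with (R3). Realizing the recurrences of (R1) and (R2) forces one to copy blocks of $\omega$ — including $H$-translated blocks — to distant, syndetically placed positions; but $H$ is an arbitrary countable group, not a subgroup of a compact group, so one cannot offload the ``$H$-direction'' onto an auxiliary compact-group fibre whose homogeneity makes a skew-product extension automatically minimal, which is the mechanism exploited in \cite{GTWZ}. Instead the (combinatorial, possibly infinite) left $H$-action must be woven directly into the $G$-recurrence structure of $\omega$, and one has to ensure that doing so never produces a spurious $G$-periodicity violating (R3) or a spurious $H$-symmetry violating (R4). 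The natural remedy is to tag each copied block with a bounded ``coordinate marker'' encoding enough of its position in $\Gamma$ to destroy all such symmetries, and then to show that these markers can always be assigned coherently — i.e.\ that the relevant class of finite tagged patterns has the amalgamation property. Proving this amalgamation is where the real work lies; granting it, the recursion runs as described and the resulting $X$ proves Theorem~\ref{Thm:Main}, with the constructions of \cite{CP} and \cite{GTWZ} recovered as special cases.
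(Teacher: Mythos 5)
Your reduction is sound as far as it goes: realizing $X$ inside $F^{G\times H}$ with commuting shift actions, getting the homomorphism $H\to\aut{X}$ from the $H$-shift, and the observation that an infinite minimal subshift free under infinite $G$ is automatically a Cantor space are all correct, and this is the same general frame as the paper (which builds a $(G\times H)$-subshift $X\subseteq 2^{G\times H}$, minimal as a $G$-flow and free as an $H$-flow). But the proposal has a genuine gap: everything that actually needs proving is deferred. Your recursion claims that requirements (R1)--(R4) can be discharged one at a time, but (R1) is not a one-shot requirement -- syndetic recurrence of a pattern is a permanent commitment that every later extension must respect, so when you first treat a pattern you must fix, once and for all, a syndetic system of locations at which it will be replanted forever, and all subsequent plantings (including the $H$-translated blocks forced by (R2)) must be made compatible with all earlier such systems simultaneously. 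You name the needed fact (``the relevant class of finite tagged patterns has the amalgamation property'') but give no definition of the tagged patterns and no argument for amalgamation; this is not a routine verification, it is the theorem. The paper's entire content is precisely this: Section 1 builds the hierarchical marker structure in $G$ (blueprints: nested, $A_n$-spaced, $A_n^5$-syndetic sets $\vec{\mathcal{S}}(n)$ with the translation-coherence property 2(c)), Section 2 supplies the tool for weaving in the $H$-direction (strong irreducibility and the $\mathrm{Print}(X,\alpha,N)$ flows, which guarantee that a prescribed $C_{n-1}$-pattern appears at \emph{every} $h\in H$ in one of finitely many designated $G$-columns), and Section 3 assembles these into an inverse limit $Y=\varprojlim Y_n$ with $X=\overline{G\cdot Y}$, where minimality is obtained from visiting sets that are $A_n^7$-syndetic \emph{uniformly} in the point $x$ -- that uniformity over the unknown $H$-behavior of $x$ is exactly what $\mathrm{Print}$ buys and what your sketch does not address.

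A second, more specific problem: your (R3) asks the construction to produce $G$-freeness directly, i.e.\ a configuration that is uniformly recurrent and simultaneously a ``$2$-coloring'' blocking every $g\neq 1_G$. That subsumes the main theorem of Gao--Jackson--Seward on free subshifts and makes your unproven amalgamation step strictly harder than what the paper needs. The paper deliberately sidesteps this: it only proves that its $X$ is free as an \emph{$H$}-flow (trivial, since $Y_0=(2^{A_0})^H$) and minimal as a $G$-flow, and then obtains $G$-freeness and the Cantor underlying space by citing Theorems 1.2 and 11.5 of \cite{GTWZ} (product with a free minimal flow, then a highly proximal extension). So either you must import those reductions as the paper does, or you must actually prove the free-plus-minimal extension step you assert -- in either case the argument as written does not yet establish Theorem~\ref{Thm:Main}.
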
 	

We may assume without loss of generality that $H$ is also infinite. We also note that to prove Theorem~\ref{Thm:Main}, it suffices to construct any minimal $G$-flow $X$ so that $H$ embeds into $\aut{X}$. If $X$ is a minimal $G$-flow such that $H$ embeds into $\aut{X}$, then by Theorem 1.2 of \cite{GTWZ}, there is a minimal, free $G$-flow $Y$ with $X\times Y$ also minimal. Then by arguing as in Theorem 11.5 of \cite{GTWZ}, we can find $Z$ a suitable highly proximal extension of $X\times Y$ which is homeomorphic to Cantor space and such that $H$ still embeds into $\aut{Z}$. However, it seems very likely that the construction given here gives an \emph{essentially free} $G$-flow, in which case the appeal to Theorem 1.2 of \cite{GTWZ} is not needed.

We start with two preliminary sections. The first is on \emph{blueprints}, a notion developed by Gao, Jackson, and Seward in \cite{GJS}. The second discusses strongly irreducible subshifts. The final section proves Theorem~\ref{Thm:Main}.
\vspace{3 mm}

\noindent
\emph{Acknowledgements} I would like to thank Eli Glasner for pointing out to me a mistake in an earlier version of \cite{CP}, which directly inspired the work here.
\section{Blueprints}

The notion of a \emph{blueprint} is developed by Gao, Jackson, and Seward in \cite{GJS}, where in particular, it is proven that every group carries a non-trivial blueprint. To keep this paper self-contained, we provide a proof of this. We delay the definition of a blueprint until we have actually constructed one. Throughout this section, we will use the group $G$; the group $H$ will figure more heavily in the next section.

For this section, we fix an exhaustion $G = \bigcup_n A_n$, where each $A_n$ is finite, symmetric, and contains the identity $1_G\in G$. We denote this exhaustion by $\mathcal{A}$. We assume that each $A_n$ is large enough to write $A_n = A_0^3\cdot A_1^3\cdots A_{n-1}^3\cdot B_n$ for some finite set $B_n$ containing $1_G$ which we now fix. In particular, notice that $A_0^3\cdots A_{n-1}^3\subseteq A_n$. Given $k< n$, we set $A_n(k) = A_k^3\cdots A_{n-1}^3\cdot B_n$. Notice that if $k'\leq k$, then $A_n(k)\subseteq A_n(k')$. Also notice that $A_kA_n(k)\supseteq A_n$.

If $F\subseteq G$ is finite, we say that $g, h\in G$ are \emph{$F$-apart} if $Fg\cap Fh = \emptyset$. We say that $S\subseteq G$ is \emph{$F$-spaced} if every $g\neq h\in S$ is $F$-apart.
\vspace{2 mm}

\begin{defin}
	\label{Def:System}
	An \emph{$\mathcal{A}$-system of height $n$} is a collection $\mathcal{S} = \{\mathcal{S}(0),...,\mathcal{S}(n)\}$ of subsets of $A_n$ defined by reverse induction as follows.
	\begin{itemize}
		\item 
		$\mathcal{S}(n) = \{1_G\}$
		\item 
		If $\mathcal{S}(k+1),...,\mathcal{S}(n)$ have all been defined, we say that $g\in A_n$ is \emph{$k$-admissible for $\mathcal{S}$} if, letting $\ell > k$ be least with $A_k\cdot g\cap A_\ell\cdot \mathcal{S}(\ell) \neq \emptyset$, then there is $h\in \mathcal{S}(\ell)$ with $A_k\cdot g\subseteq A_\ell(k)\cdot h$. Write $\mathrm{Ad}(k, \mathcal{S})$ for the set of $g\in A_n$ which are $k$-admissible for $\mathcal{S}$.
		\item 
		$\mathcal{S}(k)$ is any maximal $A_k$-spaced subset of $\mathrm{Ad}(k, \mathcal{S})$ containing $1_G$.
	\end{itemize}
For the last item, notice by reverse induction that $1_G\in \mathrm{Ad}(k, \mathcal{S})$ for each $k< n$.
\end{defin}	
\vspace{2 mm}

\noindent
Let us immediately clarify an important point about the set $\mathrm{Ad}(k, \mathcal{S})$.
\vspace{2 mm}

\begin{lemma}
	\label{Lem:GeneralAd}
	Suppose $g\in \mathrm{Ad}(k, \mathcal{S})$. Then for any $m> k$ with $A_k\cdot g\cap A_m\cdot \mathcal{S}(m) = \emptyset$, then there is $b\in \mathcal{S}(m)$ with $A_k\cdot g\subseteq A_m(k)\cdot b$.
\end{lemma}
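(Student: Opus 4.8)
The plan is to prove the statement by induction on $m - k$. (I read the hypothesis on $m$ as $A_k g \cap A_m \mathcal{S}(m) \neq \emptyset$; if the intersection were empty there would be nothing to prove, since the asserted conclusion $A_k g \subseteq A_m(k) b \subseteq A_m \mathcal{S}(m)$ would then be impossible.) Fix the $\mathcal{A}$-system $\mathcal{S}$ of height $n$, and let $\ell$ be least with $k < \ell \le n$ and $A_k g \cap A_\ell \mathcal{S}(\ell) \neq \emptyset$; such an $\ell$ exists because $g \in \mathrm{Ad}(k,\mathcal{S}) \subseteq A_n$ and $\mathcal{S}(n) = \{1_G\}$ give $g \in A_k g \cap A_n = A_k g \cap A_n \mathcal{S}(n)$, and in fact $\ell \le m$ since $m$ itself has the nonempty-intersection property. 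By $k$-admissibility of $g$ there is $h \in \mathcal{S}(\ell)$ with $A_k g \subseteq A_\ell(k) h$. If $\ell = m$ we are done, taking $b = h$.

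Now suppose $k < \ell < m$. First I would push the nonempty-intersection hypothesis up to level $\ell$: since $A_\ell(k) \subseteq A_\ell$ we get $A_k g \subseteq A_\ell h$, so $\emptyset \neq A_k g \cap A_m \mathcal{S}(m) \subseteq A_\ell h \cap A_m \mathcal{S}(m)$. Since $h \in \mathcal{S}(\ell) \subseteq \mathrm{Ad}(\ell, \mathcal{S})$ (note $\ell < m \le n$, so $\ell < n$ and $\mathrm{Ad}(\ell,\mathcal{S})$ is defined) and $m - \ell < m - k$, the inductive hypothesis applied to $h$, $\ell$, $m$ yields $b \in \mathcal{S}(m)$ with $A_\ell h \subseteq A_m(\ell) b$. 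Chaining the inclusions, $A_k g \subseteq A_\ell(k) h \subseteq A_\ell h \subseteq A_m(\ell) b \subseteq A_m(k) b$, where the last step uses $A_m(\ell) \subseteq A_m(k)$ because $k \le \ell$. This $b$ is as required.

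The argument really only uses two monotonicity facts about the sets $A_j(\cdot)$, both immediate from $A_j = A_0^3 \cdots A_{j-1}^3 B_j$ together with $1_G \in A_i \cap B_i$: that $A_j(k) \subseteq A_j$, and that $A_j(\ell) \subseteq A_j(k)$ when $k \le \ell < j$. Equivalently, one can avoid the formal induction and argue by iteration: repeatedly relay through admissibility to produce a strictly increasing sequence $k < \ell_1 < \ell_2 < \cdots$ with $h_i \in \mathcal{S}(\ell_i)$ and $A_k g \subseteq A_{\ell_i}(k) h_i$ at each stage (the invariant being maintained exactly by the chaining above); since every $\ell_i \le m$, the sequence reaches $m$ after finitely many steps, and the corresponding $h_i$ is the desired $b$. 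The hard part, such as it is, is just keeping the bookkeeping on the nested products $A_n(k)$ straight and noticing that "$A_k g$ meets $A_m \mathcal{S}(m)$" is inherited by any larger set containing $A_k g$ — which is precisely what lets one relay from $g$ at level $k$ to an element of $\mathcal{S}(\ell)$ and continue. I do not anticipate any genuine obstacle beyond this.
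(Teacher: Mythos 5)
Your proof is correct and follows essentially the same route as the paper's: induction on $m-k$, relaying through the least $\ell>k$ with $A_k\cdot g\cap A_\ell\cdot\mathcal{S}(\ell)\neq\emptyset$, invoking admissibility to get $h\in\mathcal{S}(\ell)$, and chaining $A_k\cdot g\subseteq A_\ell\cdot h\subseteq A_m(\ell)\cdot b\subseteq A_m(k)\cdot b$ via the inductive hypothesis applied to $h\in\mathrm{Ad}(\ell,\mathcal{S})$. Your reading of the hypothesis as ``$\neq\emptyset$'' is the intended one (the ``$=\emptyset$'' in the statement is a typo, as the paper's own proof confirms), and your extra bookkeeping on $A_\ell(k)\subseteq A_\ell$ and $A_m(\ell)\subseteq A_m(k)$ just makes explicit monotonicity facts the paper records in its setup.
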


\begin{proof}
	We induct on $m-k$. When $m-k = 1$, the lemma follows from the definitions. If $m-k > 1$, then consider the least $\ell > k$ with $A_k\cdot g \cap A_\ell \mathcal{S}(\ell) \neq \emptyset$. Using item (2) of the definition, there is $h\in \mathcal{S}(\ell)$ with $A_k\cdot g\subseteq A_\ell(k)\cdot h$. If $\ell = m$ we are done. If $\ell < m$, then $A_\ell\cdot h\cap A_m\cdot \mathcal{S}(m)\neq \emptyset$, so by induction we can find $b\in \mathcal{S}(m)$ with $A_\ell\cdot h\subseteq A_m(\ell)\cdot b$. Then $A_k\cdot g\subseteq A_\ell\cdot h\subseteq A_m(\ell)\cdot b\subseteq A_m(k)\cdot b$.
\end{proof}
\vspace{2 mm}

For the moment, fix an $\mathcal{A}$-system $\mathcal{S}$ of height $n$. Our first main goal is Proposition~\ref{Prop:SystemsSyndetic}, which shows that the sets $\mathcal{S}(k)$ are somewhat large.
\vspace{2 mm}

\begin{lemma}
	\label{Lem:AdmissibleSyndetic}
	Suppose $g\in \mathrm{Ad}(k, \mathcal{S})$. Then $A_k^2\cdot g\cap \mathcal{S}(k)\neq \emptyset$.
\end{lemma}	

\begin{proof}
	If $A_k^2\cdot g\cap \mathcal{S}(k)$ were empty, then $\mathcal{S}(k)\cup \{g\}$ would be a strictly larger $A_k$-spaced subset of $\mathrm{Ad}(k, \mathcal{S})$.
\end{proof}
\vspace{2 mm}

\begin{lemma}
	\label{Lem:InsideAd}
	Suppose $\ell > k$ and $h\in \mathcal{S}(\ell)$. Then $A_k^2\cdot A_\ell(k+1)\cdot h\setminus A_k\cdot A_\ell(k+1)\cdot h\subseteq \mathrm{Ad}(k, \mathcal{S})$.
\end{lemma}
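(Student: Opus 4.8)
The plan is to fix $g \in A_k^2\,A_\ell(k+1)\,h \setminus A_k\,A_\ell(k+1)\,h$ with $h \in \mathcal{S}(\ell)$ and to verify the defining clause of $k$-admissibility directly: letting $m$ be the least index $> k$ with $A_k g \cap A_m\,\mathcal{S}(m) \neq \emptyset$, I must produce $h' \in \mathcal{S}(m)$ with $A_k g \subseteq A_m(k)\,h'$. The first step is the arithmetic identity $A_\ell(k) = A_k^3 \cdot A_\ell(k+1)$, which gives $A_k g \subseteq A_k^3\,A_\ell(k+1)\,h = A_\ell(k)\,h \subseteq A_\ell h$; since also $g \in A_k g$, this shows $A_k g \cap A_\ell h \neq \emptyset$, so $m$ exists and $m \leq \ell$. (One should also record that $g \in A_n$, which is what makes the statement meaningful; this follows from $A_\ell(k)\,h \subseteq A_n$, which in turn is routine --- e.g.\ via Lemma~\ref{Lem:GeneralAd} applied to $h \in \mathrm{Ad}(\ell, \mathcal{S})$ with the index $n$, using $\mathcal{S}(n) = \{1_G\}$.)

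The core step is to show $m = \ell$, and this is exactly where the deleted shell $A_k\,A_\ell(k+1)\,h$ is used. Suppose for contradiction that $k < m < \ell$, and choose $b \in \mathcal{S}(m)$ and a point $p \in A_m b \cap A_k g$. Since $A_k g \subseteq A_\ell h$ we have $p \in A_\ell h$, so $A_m b \cap A_\ell\,\mathcal{S}(\ell) \neq \emptyset$; as $b \in \mathcal{S}(m) \subseteq \mathrm{Ad}(m, \mathcal{S})$ and $\ell > m$, Lemma~\ref{Lem:GeneralAd} yields $b' \in \mathcal{S}(\ell)$ with $A_m b \subseteq A_\ell(m)\,b'$. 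Then $p$ lies in both $A_\ell b'$ and $A_\ell h$, so the $A_\ell$-spacing of $\mathcal{S}(\ell)$ forces $b' = h$, whence $p \in A_\ell(m)\,h$. Because $m \geq k+1$ we have the nesting $A_\ell(m) \subseteq A_\ell(k+1)$, so $p \in A_\ell(k+1)\,h$ and therefore $g \in A_k p \subseteq A_k\,A_\ell(k+1)\,h$ --- contradicting the choice of $g$. Hence $m = \ell$, and since $A_k g \subseteq A_\ell(k)\,h$ with $h \in \mathcal{S}(\ell)$, the element $h$ itself witnesses $g \in \mathrm{Ad}(k, \mathcal{S})$.

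I expect the middle step to be the only real obstacle. A priori $g$ might be ``caught'' by a marker set $\mathcal{S}(m)$ at some intermediate level $k < m < \ell$ with no reason for the required containment $A_k g \subseteq A_m(k)\,b$ to hold; ruling this out is precisely what forces the removal of the thin shell $A_k\,A_\ell(k+1)\,h$ from the admissible region. The point is that Lemma~\ref{Lem:GeneralAd} together with the spacing of $\mathcal{S}(\ell)$ pins a nearby level-$m$ marker to within $A_\ell(m)\,h \subseteq A_\ell(k+1)\,h$ of $h$, which is just sharp enough to contradict $g \notin A_k\,A_\ell(k+1)\,h$; the small piece of care needed is to invoke Lemma~\ref{Lem:GeneralAd} (which allows the arbitrary index $\ell$) rather than item (2) of the definition (which would only give the least index above $m$).
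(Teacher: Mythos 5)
Your proof is correct and follows essentially the same route as the paper: deduce $A_k g \subseteq A_\ell(k)h$ with $A_k g \cap A_\ell(k+1)h = \emptyset$, then rule out an intermediate level $k<m<\ell$ by applying Lemma~\ref{Lem:GeneralAd} to $b\in\mathcal{S}(m)$ and using $A_\ell(m)\subseteq A_\ell(k+1)$ to contradict the removed shell. Your extra details (identifying $b'=h$ via $A_\ell$-spacing, and checking $g\in A_n$) only make explicit what the paper leaves implicit.
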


\begin{proof}
	Fix $g$ in the left hand side. Then $A_k\cdot g\subseteq A_\ell(k)\cdot h\setminus A_\ell(k+1)\cdot h$. Towards a contradiction, suppose there were some $m$, $k< m < \ell$, with $A_k\cdot g\cap A_m\cdot \mathcal{S}(m) \neq \emptyset$. Suppose $b\in \mathcal{S}(m)$ satisfies $A_k\cdot g\cap A_m\cdot b \neq \emptyset$. Then since $b\in \mathrm{Ad}(m, \mathcal{S})$, Lemma~\ref{Lem:GeneralAd} implies that $A_m\cdot b\subseteq A_\ell(m)\cdot h$. But since we have $A_k\cdot g\cap A_\ell(k+1)\cdot h = \emptyset$, this is a contradiction.
\end{proof}
\vspace{2 mm}

\begin{defin}
	\label{Def:SyndeticIn}
	Suppose $F\subseteq G$ is finite, $D\subseteq G$, and let $S\subseteq D$. We say that $S$ is \emph{$F$-syndetic in $D$} if for any $g\in G$ such that $Fg\subseteq D$, we have $Fg\cap S\neq \emptyset$. If $D = G$, we simply say that $S$ is \emph{$F$-syndetic}. We say that $S$ is \emph{syndetic} if there is a finite $F\subseteq G$ so that $S$ is $F$-syndetic.
\end{defin}	
\vspace{2 mm}

\begin{prop}
	\label{Prop:SystemsSyndetic}
	The set $\mathcal{S}(k)\subseteq A_n$ is $A_k^5$-syndetic in $A_n$.
\end{prop}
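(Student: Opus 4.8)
The plan is to prove this directly (no induction on $k$), reducing everything to membership in $\mathrm{Ad}(k,\mathcal{S})$. By Lemma~\ref{Lem:AdmissibleSyndetic}, if $g'\in\mathrm{Ad}(k,\mathcal{S})$ then $A_k^2 g'\cap\mathcal{S}(k)\neq\emptyset$, and $A_k^2 g'\subseteq A_k^5 g$ whenever $g'\in A_k^3 g$. So it suffices to show: for every $g\in G$ with $A_k^5 g\subseteq A_n$ there is $g'\in A_k^3 g\cap\mathrm{Ad}(k,\mathcal{S})$. (The case $k=n$ is immediate: $\mathcal{S}(n)=\{1_G\}$, and $A_n^5 g\subseteq A_n$ forces $g\in A_n$, hence $1_G\in A_n^5 g$.)

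So fix $g$ with $A_k^5 g\subseteq A_n$. If $g\in\mathrm{Ad}(k,\mathcal{S})$, take $g'=g$. Otherwise, let $\ell$ be least with $k<\ell\le n$ and $A_k g\cap A_\ell\mathcal{S}(\ell)\neq\emptyset$; this exists since $A_k g\subseteq A_n=A_n\mathcal{S}(n)$. Pick $h_0\in\mathcal{S}(\ell)$ with $A_k g\cap A_\ell h_0\neq\emptyset$. Since $g$ is not $k$-admissible and $\ell$ is precisely the index appearing in Definition~\ref{Def:System}, no $h\in\mathcal{S}(\ell)$ satisfies $A_k g\subseteq A_\ell(k)h$; in particular $A_k g\not\subseteq A_\ell(k)h_0$. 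Set $P=A_\ell(k+1)h_0$, so that $A_\ell(k)h_0=A_k^3 P$.

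The heart of the argument is to bound, from both sides, how many left translates by $A_k$ are needed to bring $g$ into $P$. Since $A_0^3\cdots A_{k-1}^3\subseteq A_k$ and $A_\ell=A_0^3\cdots A_{\ell-1}^3 B_\ell$, one has $A_\ell\subseteq A_k^4 A_\ell(k+1)$; hence $A_k g\cap A_\ell h_0\neq\emptyset$ puts a point of $A_k g$ inside $A_k^4 P$, and therefore $g\in A_k^5 P$. On the other hand, if $g$ were in $A_k^2 P$ then $A_k g\subseteq A_k^3 P=A_\ell(k)h_0$, a contradiction; so $g\notin A_k^2 P$. Let $d$ be least with $g\in A_k^d P$, so $3\le d\le 5$, and write $g=p_1\cdots p_d z$ with each $p_i\in A_k$ and $z\in P$. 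Put $g'=p_{d-1}p_d z$. Then $g'\in A_k^2 P$, and $g'=(p_1\cdots p_{d-2})^{-1}g\in A_k^{d-2}g\subseteq A_k^3 g$; moreover $g'\notin A_k P$, since $g'\in A_k P$ would give $g\in A_k^{d-1}P$, contradicting minimality of $d$. Thus $g'\in A_k^2 A_\ell(k+1)h_0\setminus A_k A_\ell(k+1)h_0\subseteq\mathrm{Ad}(k,\mathcal{S})$ by Lemma~\ref{Lem:InsideAd}, which is the point we needed.

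The step I expect to be most delicate is the last one: extracting from the two hypotheses that $A_k g$ meets $A_\ell h_0$ and that $A_k g\not\subseteq A_\ell(k)h_0$ the sandwich $g\in A_k^5 P\setminus A_k^2 P$, and then peeling off exactly $d-2$ factors so that $g'$ lands in the relatively thin shell $A_k^2 P\setminus A_k P$ rather than overshooting into $A_k P$ — this is where minimality of $d$ is essential. One also has to be careful that $\ell$ is taken to be the least index in Definition~\ref{Def:System}, so that the failure of admissibility of $g$ genuinely yields $A_k g\not\subseteq A_\ell(k)h_0$ for the chosen center $h_0$ and not merely for some other center in $\mathcal{S}(\ell)$.
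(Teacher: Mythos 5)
Your proof is correct and follows essentially the same route as the paper: non-admissibility of $g$ places it in $A_k^5\cdot A_\ell(k+1)\cdot h_0$ but outside $A_k^2\cdot A_\ell(k+1)\cdot h_0$, you peel off $A_k$-factors to land in the shell $A_k^2\cdot A_\ell(k+1)\cdot h_0\setminus A_k\cdot A_\ell(k+1)\cdot h_0$, and then apply Lemma~\ref{Lem:InsideAd} followed by Lemma~\ref{Lem:AdmissibleSyndetic}, exactly as the paper does (which phrases the peeling via $fg$ and a translate $f_0\in A_k^2$ rather than your minimal exponent $d$). The only cosmetic differences are working with $g$ directly instead of $fg$ and the explicit (trivial) treatment of $k=n$.
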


\begin{proof}
	Suppose we have $g\in G$ with $A_k^5\cdot g\subseteq A_n$. If $g\in \mathrm{Ad}(k, \mathcal{S})$, we are done by Lemma~\ref{Lem:AdmissibleSyndetic}, so assume this is not the case. Let $\ell > k$ be least with $A_k\cdot g\cap A_\ell\cdot \mathcal{S}(\ell) \neq \emptyset$, and fix some $h\in \mathcal{S}(\ell)$ and $f\in A_k$ with $fg\in A_\ell\cdot h$. Notice that we cannot have $fg\in A_k\cdot A_\ell(k+1)\cdot h$, as this would imply that $g\in \mathrm{Ad}(k, \mathcal{S})$. In particular, for some $i\in \{2, 3, 4\}$, we have $fg\in A_k^i\cdot A_\ell(k+1)\cdot h\setminus A_k^{i-1}\cdot A_\ell(k+1)\cdot h$. In each case, we can find $f_0\in A_k^2$ with $f_0fg\in A_k^2\cdot A_\ell(k+1)\cdot h\setminus A_k\cdot A_\ell(k+1)\cdot h$. By Lemma~\ref{Lem:InsideAd}, we have $f_0fg\in \mathrm{Ad}(k, \mathcal{S})$, so by Lemma~\ref{Lem:AdmissibleSyndetic}, we have $A_k^2\cdot f_0fg\cap \mathcal{S}(k)\neq \emptyset$. We are done once we note that $A_k^2\cdot f_0f\subseteq A_k^5$.
\end{proof}
\vspace{2 mm}

We now investigate how to modify $\mathcal{A}$-systems to create new ones. Definition~\ref{Def:RelativeSystem} and Proposition~\ref{Prop:RelativeSystem} give a method to restrict to a smaller system, while Definition~\ref{Def:ReplaceSystem} and Proposition~\ref{Prop:ReplaceSystem} allow us to print a smaller system inside a larger one.
\vspace{2 mm}

\begin{defin}
	\label{Def:RelativeSystem}
	Suppose $g\in \mathcal{S}(m)$. Then $(g\cdot \mathcal{S})|_m = \{(g\cdot \mathcal{S})|_m(0),..., (g\cdot \mathcal{S})|_m(m)\}$ denotes the $\mathcal{A}$-system of height $m$ where for $k\leq m$ we set $(g\cdot \mathcal{S})|_m(k) = (\mathcal{S}(k)\cap A_m\cdot g)\cdot g^{-1}$. If $g = 1_G$, we simply write $\mathcal{S}|_m$.
\end{defin}
\vspace{2 mm}

\begin{prop}
	\label{Prop:RelativeSystem}
	 $(g\cdot \mathcal{S})|_m$ is an $\mathcal{A}$-system of height $m$.
\end{prop}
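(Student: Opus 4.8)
The plan is to verify the three clauses of Definition~\ref{Def:System} for $\mathcal{T} := (g\cdot\mathcal{S})|_m$ directly, by reverse induction on $k$ from $m$ down to $0$. First I would note that each $\mathcal{T}(k) = (\mathcal{S}(k)\cap A_m g)g^{-1}$ is a subset of $A_m$, since right multiplication by $g^{-1}$ carries $A_m g$ onto $A_m$. The base case $k=m$ is quick: since $g\in\mathcal{S}(m)$ and $\mathcal{S}(m)$ is $A_m$-spaced, any $h\in\mathcal{S}(m)\cap A_m g$ fails to be $A_m$-apart from $g$ and so equals $g$, giving $\mathcal{T}(m)=\{1_G\}$. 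In the inductive step at level $k<m$, the routine parts are that $\mathcal{T}(k)$ is $A_k$-spaced --- because $\mathcal{S}(k)$ is and $A_k a\cap A_k b=\emptyset$ iff $A_k(ag^{-1})\cap A_k(bg^{-1})=\emptyset$ --- and that $1_G\in\mathcal{T}(k)$, which amounts to $g\in\mathcal{S}(k)$ and is the one place I will need that the systems in play are nested, i.e.\ $\mathcal{S}(m)\subseteq\mathcal{S}(k)$ for $k\le m$.

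The core of the argument is an identification of $\mathrm{Ad}(k,\mathcal{T})$. I expect to prove something of the form: for $x\in A_m$, one has $x\in\mathrm{Ad}(k,\mathcal{T})$ if and only if $xg\in\mathrm{Ad}(k,\mathcal{S})$ and $A_k xg$ is confined to a suitable ``window'' $A_m(j)g$ around $g$. The observation making the restriction harmless is that $g\in\mathcal{S}(m)$: for any $x\in A_m$ the ball $A_k xg$ already meets $A_m g\subseteq A_m\mathcal{S}(m)$, so the least $\ell>k$ with $A_k xg\cap A_\ell\mathcal{S}(\ell)\ne\emptyset$ satisfies $\ell\le m$ and is visible to $\mathcal{T}$. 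For such an $\ell$, Lemma~\ref{Lem:GeneralAd} promotes the nonempty intersection to an inclusion $A_k xg\subseteq A_\ell(k)h$ with $h\in\mathcal{S}(\ell)$; using $g\in\mathcal{S}(m)$ again, together with Lemma~\ref{Lem:GeneralAd} once more when $\ell<m$, one checks that $A_\ell(k)h$ stays inside the window, so $h\in\mathcal{S}(\ell)\cap A_m g$ and $hg^{-1}\in\mathcal{T}(\ell)$ witnesses $k$-admissibility of $x$ for $\mathcal{T}$; the reverse direction runs the same chain of inclusions backwards. Points $x$ for which $A_k xg$ escapes the window must be excluded, because the level recorded for them jumps above $m$; this is exactly the kind of boundary bookkeeping handled by Lemma~\ref{Lem:InsideAd}.

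Granting the identification, the rest of the inductive step is short. If $h\in\mathcal{S}(k)\cap A_m g$, then Lemma~\ref{Lem:GeneralAd} applied to $h$ and $g$ confines $A_k h$ to the window, so $hg^{-1}\in\mathrm{Ad}(k,\mathcal{T})$, giving $\mathcal{T}(k)\subseteq\mathrm{Ad}(k,\mathcal{T})$. For maximality, if $x\in\mathrm{Ad}(k,\mathcal{T})$ had $A_k^2 x\cap\mathcal{T}(k)=\emptyset$, then $xg\in\mathrm{Ad}(k,\mathcal{S})$, the ball $A_k^2 xg$ lies in the window, and $A_k^2 xg\cap(\mathcal{S}(k)\cap A_m g)=\emptyset$; since the part of $\mathcal{S}(k)$ outside the window is $A_m$-far from $A_k^2 xg$, in fact $A_k^2 xg\cap\mathcal{S}(k)=\emptyset$, so $\mathcal{S}(k)\cup\{xg\}$ would contradict maximality of $\mathcal{S}(k)$, exactly as in Lemma~\ref{Lem:AdmissibleSyndetic}.

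The hard part will be the second paragraph: identifying the correct window (some $A_m(j)g$) to track at level $k$ so that the restriction neither drops admissible points lying near its edge nor admits spurious ones, and checking that the ``least $\ell$'' in the admissibility condition genuinely agrees whether computed in $\mathcal{S}$ or in $\mathcal{T}$ for points well inside. I anticipate this will parallel the proof of Proposition~\ref{Prop:SystemsSyndetic} closely, with Lemmas~\ref{Lem:GeneralAd} and~\ref{Lem:InsideAd} doing the heavy lifting.
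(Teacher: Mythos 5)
Your proposal is correct and follows essentially the same route as the paper: identify $\mathrm{Ad}(k,(g\cdot\mathcal{S})|_m)$ with $(\mathrm{Ad}(k,\mathcal{S})\cap A_m\cdot g)\cdot g^{-1}$ (your extra ``window'' clause is in fact automatic, since for $x\in A_m$ the ball $A_k\cdot xg$ meets $A_m\cdot g$, and Lemma~\ref{Lem:GeneralAd} together with $A_m$-spacedness of $\mathcal{S}(m)$ forces $A_k\cdot xg\subseteq A_m(k)\cdot g$), and then transfer maximality by noting that any element of $\mathcal{S}(k)$ outside $A_m\cdot g$ has its $A_k$-ball disjoint from $A_m\cdot g$ and so cannot obstruct points confined to $A_m(k)\cdot g$. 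Two cosmetic remarks: the boundary bookkeeping you anticipate is really done by Lemma~\ref{Lem:GeneralAd} rather than Lemma~\ref{Lem:InsideAd}, and the nesting $\mathcal{S}(m)\subseteq\mathcal{S}(k)$ you invoke to get $1_G\in(g\cdot\mathcal{S})|_m(k)$ is a point the paper's own proof passes over silently as well (it holds in the uniform systems where the proposition is actually applied).
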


\begin{proof}
	We proceed by reverse induction on $k< m$. First we note that $\mathrm{Ad}(k, (g\cdot\mathcal{S})|_m) = (\mathrm{Ad}(k, \mathcal{S})\cap A_m\cdot g)\cdot g^{-1}$. Then, if $b, h\in \mathrm{Ad}(k, \mathcal{S})$ with $A_k\cdot b\cap A_m\cdot g = \emptyset$ and $A_k\cdot h\subseteq A_m(k)\cdot g$, then we have $A_k\cdot b\cap A_k\cdot h = \emptyset$. It follows that $(\mathcal{S}(k)\cap A_k\cdot g)\cdot g^{-1}$ is a maximal $A_k$-spaced subset of $\mathrm{Ad}(k, (g\cdot \mathcal{S})|_m)$. 
\end{proof}
\vspace{2 mm}

\begin{defin}
	\label{Def:ReplaceSystem}
	Let $\mathcal{S}$ be an $\mathcal{A}$-system of height $n$. Let $\mathcal{T}$ be an $\mathcal{A}$-system of height $m$ for some $m< n$. Given $g\in \mathcal{S}(m)$, we let $(\mathcal{S}, \mathcal{T}, g)$ denote the $\mathcal{A}$-system of height $n$ where for $k\leq n$, we have 
	\begin{itemize}
		\item 
		$(\mathcal{S}, \mathcal{T}, g)(k) = \mathcal{S}(k)$ for $m\leq k \leq n$.
		\item 
		$(\mathcal{S}, \mathcal{T}, g)(k) = (\mathcal{S}(k)\setminus A_m\cdot g)\cup \mathcal{T}(m)\cdot g$ for $k < m$.
	\end{itemize}
\end{defin}
\vspace{2 mm}

\begin{prop}
	\label{Prop:ReplaceSystem}
	$(\mathcal{S}, \mathcal{T}, g)$ is an $\mathcal{A}$-system of height $n$. 
\end{prop}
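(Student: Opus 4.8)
The plan is to verify directly that the collection $\mathcal{U} := (\mathcal{S},\mathcal{T},g)$ meets the reverse-inductive description of Definition~\ref{Def:System}. Since $\mathrm{Ad}(k,\mathcal{U})$ depends only on the explicitly given sets $\mathcal{U}(k+1),\dots,\mathcal{U}(n)$, it suffices to check, for each $k\le n$, that $\mathcal{U}(k)$ is a maximal $A_k$-spaced subset of $\mathrm{Ad}(k,\mathcal{U})$ containing $1_G$. The case $k=n$ is immediate, and for $m\le k<n$ every level $\mathcal{U}(\ell)$ with $\ell>k$ equals $\mathcal{S}(\ell)$, so $\mathrm{Ad}(k,\mathcal{U})=\mathrm{Ad}(k,\mathcal{S})$ and $\mathcal{U}(k)=\mathcal{S}(k)$ already has the required property. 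All the work is in the range $k<m$.

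The geometric point is that, relative to the splicing region $A_m\cdot g$, the system $\mathcal{U}$ looks like $\mathcal{T}\cdot g$ on the inside and like $\mathcal{S}$ on the outside, with no interaction across the boundary. I would first record two facts, both via Lemma~\ref{Lem:GeneralAd}. (a) For every $\ell<m$ and $t\in\mathrm{Ad}(\ell,\mathcal{T})$ one has $A_\ell\cdot t\subseteq A_m(\ell)$: apply Lemma~\ref{Lem:GeneralAd} to $\mathcal{T}$ with top level $m$, using $\mathcal{T}(m)=\{1_G\}$ and $t\in A_m$. Hence $A_\ell\cdot\mathcal{T}(\ell)\cdot g\subseteq A_m(\ell)\cdot g\subseteq A_m\cdot g$. (b) For every $\ell<m$ and $s\in\mathrm{Ad}(\ell,\mathcal{S})$, either $A_\ell\cdot s\cap A_m\cdot g=\emptyset$ or $A_\ell\cdot s\subseteq A_m(\ell)\cdot g$: if $A_\ell\cdot s$ meets $A_m\cdot g\subseteq A_m\cdot\mathcal{S}(m)$, Lemma~\ref{Lem:GeneralAd} produces $b\in\mathcal{S}(m)$ with $A_\ell\cdot s\subseteq A_m(\ell)\cdot b$, so $A_m\cdot b$ meets $A_m\cdot g$ and $b=g$ by $A_m$-spacedness of $\mathcal{S}(m)$. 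In particular, for $s\in\mathcal{S}(\ell)$ the conditions $s\in A_m\cdot g$, $A_\ell\cdot s\cap A_m\cdot g\ne\emptyset$, and $A_\ell\cdot s\subseteq A_m(\ell)\cdot g$ are equivalent.

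Using (a) and (b) I would then establish, for each $k<m$: that $\mathcal{U}(k)$ is $A_k$-spaced (internal spacing of the two pieces $\mathcal{S}(k)\setminus A_m g$ and $\mathcal{T}(k)\cdot g$ is inherited; the $A_k$-neighbourhoods of the second piece lie in $A_m\cdot g$ by (a), those of the first piece avoid $A_m\cdot g$ by (b), so the pieces are mutually $A_k$-apart); that $1_G\in\mathcal{U}(k)$ (if $g=1_G$ use $1_G\in\mathcal{T}(k)$; otherwise $1_G\notin A_m\cdot g$ by $A_m$-spacedness of $\mathcal{S}(m)$); and, by tracking the least $\ell>k$ with $A_k x\cap A_\ell\mathcal{U}(\ell)\ne\emptyset$ and splitting on whether it is $\ge m$ (where $\mathcal{U}$ agrees with $\mathcal{S}$) or $<m$ (where the witness lies in $\mathcal{S}(\ell)\setminus A_m g$ or in $\mathcal{T}(\ell)\cdot g$, the two separated by (a), (b)), the admissibility dichotomy: (i) if $x\in\mathrm{Ad}(k,\mathcal{U})$ and $A_k x$ meets $A_m\cdot g$, then in fact $A_k x\subseteq A_m(k)\cdot g$ and $xg^{-1}\in\mathrm{Ad}(k,\mathcal{T})$; (ii) if $x\in\mathrm{Ad}(k,\mathcal{U})$ and $A_k x\cap A_m\cdot g=\emptyset$, then $x\in\mathrm{Ad}(k,\mathcal{S})$; and (iii) conversely $\mathcal{S}(k)\setminus A_m g\subseteq\mathrm{Ad}(k,\mathcal{U})$ and $\mathcal{T}(k)\cdot g\subseteq\mathrm{Ad}(k,\mathcal{U})$, so $\mathcal{U}(k)\subseteq\mathrm{Ad}(k,\mathcal{U})$. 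Maximality then follows: if $x\in\mathrm{Ad}(k,\mathcal{U})$ could be added to $\mathcal{U}(k)$ keeping it $A_k$-spaced, then in case (i) $xg^{-1}$ could be added to $\mathcal{T}(k)$ within $\mathrm{Ad}(k,\mathcal{T})$, and in case (ii) $x$ could be added to $\mathcal{S}(k)$ within $\mathrm{Ad}(k,\mathcal{S})$ — here using (b) to see $x$ is automatically $A_k$-apart from the deleted elements $\mathcal{S}(k)\cap A_m g$, whose $A_k$-neighbourhoods sit inside $A_m\cdot g$ — in each case contradicting the maximality of $\mathcal{T}(k)$ or $\mathcal{S}(k)$; hence $x\in\mathcal{U}(k)$ already.

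The main obstacle is exactly this admissibility dichotomy together with the maximality step: one must be sure that altering only the levels below $m$, and only inside $A_m\cdot g$, neither creates new admissible points whose $A_k$-translate straddles the boundary of $A_m\cdot g$ nor destroys the maximality of $\mathcal{S}(k)$ because of the deleted elements. What makes it go through is that any element of $\mathcal{S}(\ell)$ or $\mathcal{T}(\ell)$ relevant near $A_m\cdot g$ has its entire $A_\ell$-neighbourhood trapped in $A_m(\ell)\cdot g$, respectively $A_m(\ell)$, so ``inside'' and ``outside'' genuinely do not interact — this is precisely the content of facts (a) and (b).
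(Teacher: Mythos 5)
Your argument is correct and takes essentially the same route as the paper: the paper's proof is your dichotomy in compressed form, namely the identity $\mathrm{Ad}(k,(\mathcal{S},\mathcal{T},g)) = (\mathrm{Ad}(k,\mathcal{S})\setminus A_m\cdot g)\cup \mathrm{Ad}(k,\mathcal{T})\cdot g$ together with the observation that $\mathcal{S}(k)\setminus A_m\cdot g$ and $\mathcal{T}(k)\cdot g$ are $A_k$-apart, carried out by reverse induction on $k$ with nothing to check for $k\geq m$. Your facts (a) and (b), obtained from Lemma~\ref{Lem:GeneralAd} and the $A_m$-spacedness of $\mathcal{S}(m)$, simply make explicit the boundary noninteraction that the paper leaves implicit, so this is an expansion of the same proof rather than a different one.
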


\begin{proof}
	We proceed by reverse induction on $k \leq n$. For $k\geq m$ there is nothing to prove. For $k < m$, we observe that $\mathrm{Ad}(k, (\mathcal{S}, \mathcal{T}, g)) = (\mathrm{Ad}(k, \mathcal{S})\setminus A_m\cdot g)\cup \mathrm{Ad}(k, \mathcal{T})\cdot g$. Then we note that $\mathcal{S}(k)\setminus A_m\cdot g$ and $\mathcal{T}(m)\cdot g$ are $A_k$-apart. It follows that $(\mathcal{S}(k)\setminus A_m\cdot g)\cup \mathcal{T}(m)\cdot g$ is a maximal $A_k$-spaced subset of $\mathrm{Ad}(k, (\mathcal{S}, \mathcal{T}, g))$.
\end{proof}
\vspace{2 mm}

We use Proposition~\ref{Prop:ReplaceSystem} to construct particularly nice $\mathcal{A}$-systems.
\vspace{2 mm}

\begin{defin}
	\label{Def:UniformSystem}
	Let $\mathcal{S}$ be an $\mathcal{A}$-system of height $n$. We call $\mathcal{S}$ \emph{uniform} if $(g\cdot \mathcal{S})|_m = (h\cdot \mathcal{S})|_m$ for any $g, h\in \mathcal{S}(m)$ and any $m\leq n$.
\end{defin}
\vspace{2 mm}

\begin{prop}
	\label{Prop:ExistsUniform}
	There is a sequence $\{\mathcal{S}_n: n< \omega\}$ of uniform $\mathcal{A}$-systems such that $\mathcal{S}_n$ has height $n$ and $\mathcal{S}_n|_m = \mathcal{S}_m$ for any $m\leq n$.
\end{prop}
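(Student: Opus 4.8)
The plan is to build the $\mathcal{S}_n$ by induction on $n$, each time starting from an arbitrary $\mathcal{A}$-system of the right height and then overwriting it, via Proposition~\ref{Prop:ReplaceSystem}, with copies of the systems already constructed. For $n=0$ there is nothing to prove: $\mathcal{S}_0=\{\{1_G\}\}$ is the unique $\mathcal{A}$-system of height $0$, and it is vacuously uniform. Suppose then that $\mathcal{S}_0,\dots,\mathcal{S}_{n-1}$ have been constructed, each uniform, with $\mathcal{S}_j|_i=\mathcal{S}_i$ whenever $i\le j\le n-1$.

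To produce $\mathcal{S}_n$, first fix any $\mathcal{A}$-system $\mathcal{R}$ of height $n$; one exists because the recipe of Definition~\ref{Def:System} can always be carried out, there being at every level a maximal $A_k$-spaced subset of $\mathrm{Ad}(k,\mathcal{R})$ containing $1_G$. Then perform the following downward cascade of replacements. For $j=n-1,n-2,\dots,0$ in turn, take the current $\mathcal{A}$-system, enumerate its level-$j$ set (a finite subset of $A_n$) as $g_1,\dots,g_r$, and replace the system by the result of successively applying the operations $(\,\cdot\,,\mathcal{S}_j,g_1),\dots,(\,\cdot\,,\mathcal{S}_j,g_r)$. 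Each such operation is legitimate by Proposition~\ref{Prop:ReplaceSystem}, since $\mathcal{S}_j$ has height $j<n$ and a replacement against an element of the level-$j$ set does not alter the level-$j$ set, so every $g_t$ remains available while step $j$ runs. Let $\mathcal{S}_n$ be the system left at the end of the cascade.

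The verification hinges on two points. First, a replacement $(\,\cdot\,,\mathcal{S}_j,g)$ modifies the system only inside the block $A_j g$ and leaves all levels $\ge j$ untouched; since the level-$j$ set is $A_j$-spaced, the blocks used during step $j$ are pairwise disjoint, and the level-$i$ set is unchanged from the start of step $i$ onward, so it already coincides with the level-$i$ set of $\mathcal{S}_n$. Second — the crux — if an $i$-level point $h$ of the current system lies inside a block $A_j\gamma$ that already carries a translated copy of the uniform system $\mathcal{S}_j$ (for some $j>i$), then the replacement $(\,\cdot\,,\mathcal{S}_i,h)$ does nothing: one checks that $A_i h\subseteq A_j\gamma$, a buffering estimate obtained by iterating the admissibility clauses of Definition~\ref{Def:System} and using the nestings $A_\ell(k)\subseteq A_\ell$ (which is precisely why the exhaustion was taken with $A_n=A_0^3\cdots A_{n-1}^3B_n$), so that the local picture $(h\cdot\mathcal{S})|_i$ is a $\gamma$-translate of a level-$i$ local picture of $\mathcal{S}_j$, and this equals $\mathcal{S}_i$ by uniformity of $\mathcal{S}_j$ together with $\mathcal{S}_j|_i=\mathcal{S}_i$; hence the replacement is a no-op. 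Processing the cascade from high levels downward, this shows that at step $j$ each level-$j$ point is either already interior to a copy of some $\mathcal{S}_{j'}$ with $j'>j$, where the step-$j$ replacement does nothing, or else is not interior to any such copy, where the replacement installs a fresh copy of $\mathcal{S}_j$ on previously untouched ground; and the later steps $j-1,\dots,0$ never disturb any of these copies.

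It follows that for every $m\le n$ and every $g$ in the level-$m$ set of $\mathcal{S}_n$ we have $(g\cdot\mathcal{S}_n)|_m=\mathcal{S}_m$, which is exactly the statement that $\mathcal{S}_n$ is uniform. Taking $g=1_G$ gives $\mathcal{S}_n|_m=\mathcal{S}_m$ for each $m\le n-1$; together with the trivial identity $(\mathcal{S}|_{n-1})|_m=\mathcal{S}|_m$ (valid for any $\mathcal{A}$-system $\mathcal{S}$ and $m\le n-1$, since $A_m\subseteq A_{n-1}$) and the inductive hypothesis, this yields coherence $\mathcal{S}_n|_m=\mathcal{S}_m$ for all $m\le n$, completing the induction. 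The main obstacle I expect is the no-op claim: one must check both that the replacement regions at successive levels nest correctly — so that ``interior to a copy of $\mathcal{S}_j$'' is a robust notion and the $A_i$-ball around an interior $i$-level point cannot leak out of its enclosing block — and that inside such a nested block the structure already present literally agrees with the copy being printed. This is where the fine structure of $\mathcal{A}$ (the cubes $A_k^3$, the refined sets $A_\ell(k)$, and the resulting buffers between blocks) is essential, and keeping track of the state of the system through the downward cascade is the most delicate bookkeeping in the argument.
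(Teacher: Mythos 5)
Your proposal is correct and is essentially the paper's argument: induct on $n$, start from an arbitrary $\mathcal{A}$-system of height $n$, and use Proposition~\ref{Prop:ReplaceSystem} to print the previously constructed uniform systems $\mathcal{S}_j$ into it, with uniformity and coherence following from the local-picture computation at interior points. The only procedural difference is that the paper performs a single replacement at each point of $T(k)=\mathcal{T}(k)\setminus\bigcup_{k<m<n}A_m\cdot\mathcal{T}(m)$ (the points not interior to any higher-level block), whereas you sweep the levels top-down and replace at every current level-$j$ point, arguing that the redundant replacements inside already-printed copies are no-ops; that no-op verification (via admissibility giving $A_i h\subseteq A_j(i)\cdot\gamma$ and uniformity plus $\mathcal{S}_j|_i=\mathcal{S}_i$) is precisely the check the paper leaves implicit, so the two routes coincide in substance.
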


\begin{proof}
	We proceed by (forward) induction. For $n = 0$ the unique $\mathcal{A}$-system of height zero is vacuously uniform. Suppose $\mathcal{S}_0,...,\mathcal{S}_{n-1}$ have been constructed. Let $\mathcal{T} := \mathcal{T}_0$ be any $\mathcal{A}$-system of height $n$. For each $k < n$, we set
	$$T(k) = \mathcal{T}(k)\setminus \left(\bigcup_{k< m< n} A_m\cdot \mathcal{T}(m)\right)$$
	Note that the sets $T(0),...,T(n-1)$ are pairwise disjoint. Fix some enumeration of $\bigcup_{k< n} T(k) = \{g_0,...,g_{r-1}\}$, and for each $i< r$, let $\phi(i)< n$ be the unique index with $g_i\in T(\phi(i))$. We repeatedly use Proposition~\ref{Prop:ReplaceSystem} to define $\mathcal{A}$-systems $\mathcal{T}_0,...,\mathcal{T}_r$, and we set $\mathcal{S}_n = \mathcal{T}_r$. If $\mathcal{T}_i$ has been built for some $i < r$, we set $\mathcal{T}_{i+1} = (\mathcal{T}_i, \mathcal{S}_{\phi(i)}, g_i)$. Then $\mathcal{S}_n$ is a uniform $\mathcal{A}$-system of height $n$ as desired.
\end{proof}
\vspace{2 mm}

\begin{defin}\mbox{}
	\label{Def:Blueprint}
	\vspace{-3 mm}
	
	\begin{enumerate}
		\item 
		A sequence $\vec{\mathcal{S}}:= \{\mathcal{S}_n: n< \omega\}$ constructed as in Proposition~\ref{Prop:ExistsUniform} will be called a \emph{coherent sequence}.
		\item 
		Let $\vec{\mathcal{S}}$ be a coherent sequence. The \emph{blueprint} of $\vec{\mathcal{S}}$ is the sequence $\{\vec{\mathcal{S}}(n): n< \omega\}$, where $\vec{\mathcal{S}}(n) = \bigcup_{N\geq n} \mathcal{S}_N(n)$. We note the following properties of the blueprint of $\vec{\mathcal{S}}$:
		\begin{enumerate}
			\item 
			$\vec{\mathcal{S}}(n)\supseteq \vec{\mathcal{S}}(n+1)$, and each $\vec{\mathcal{S}}(n)$ is $A_n$-spaced and $A_n^5$-syndetic.
			\item 
			For any $k\leq n$, $g\in \vec{\mathcal{S}}(k)$, and $h\in \vec{\mathcal{S}}(n)$, we either have $A_k\cdot g\cap A_n
			\cdot h = \emptyset$ or $A_k\cdot g\subseteq A_n(k)\cdot h$.
			\item 
			For any $k\leq n$ and $g, h\in \vec{\mathcal{S}}(n)$, we have $(\vec{\mathcal{S}}(k)\cap A_n\cdot g)g^{-1} = (\vec{\mathcal{S}}(k)\cap A_n\cdot h)h^{-1}$.
			\item 
			For each $n< \omega$, we have $|\vec{\mathcal{S}}(n)\cap A_{n+1}|\geq |A_n^2\cdot B_{n+1}|/|A_n^5|$.
		\end{enumerate} 
	\end{enumerate}
\end{defin}

\begin{rem}
	Compare this to Definition 5.1.2 of \cite{GJS}. In fact, we have constructed what they call a \emph{centered} blueprint.
\end{rem}
\vspace{2 mm}

\section{Strongly irreducible subshifts}

In this section, we work with the group $H$. If $M$ is a compact space, then $H$ acts on the space $M^H$ by shift, where given $x\in M^h$ and $g, h\in H$, we set $g\cdot x(h) = x(hg)$. A \emph{subshift} is any non-empty closed $X\subseteq M^H$ which is $H$-invariant. Most of the time, $X$ will be a finite set $A$. Let $X\subseteq A^H$ be a subshift. If $C\subseteq H$ is finite, the set of \emph{$C$-patterns} of $X$ is given by $P_C(X) = \{x|_C: x\in X\}\subseteq A^C$. If $D\subseteq H$ is finite, recall the definitions of $D$-spaced and $D$-apart given immediately before Definition~\ref{Def:System}. 
\vspace{2 mm}

\begin{defin}
	\label{Def:StrIrred}
	Let $D\subseteq H$ be finite. A subshift $X\subseteq A^H$ is \emph{$D$-irreducible} if for any $S_0, S_1\subseteq H$ which are $D$-apart and any $x_0, x_1\in X$, there is $y\in X$ such that $y|_{S_i} = x_i|_{S_i}$ for each $i < 2$. We sometimes say that $y$ \emph{blends} $x_0|_{S_0}$ and $x_1|_{S_1}$. We say that $X$ is \emph{strongly irreducible} if $X$ is $D$-irreducible for some finite $D\subseteq H$.
\end{defin}
\vspace{2 mm}

\begin{fact}
		Let $A$ and $B$ be finite sets. If $X\subseteq A^H$ is $D_X$-irreducible and $Y\subseteq B^H$ is $D_Y$-irreducible, then $X\times Y\subseteq (A\times B)^H$ is $(D_X\cup D_Y)$-irreducible.
\end{fact}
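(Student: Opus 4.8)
\emph{Proof proposal.} The plan is to unwind the definitions and exploit the fact that the two coordinates of $(A\times B)^H$ decouple completely. First I would record the standing identification: an element of $X\times Y$ is viewed as the point $h\mapsto (x(h),y(h))$ of $(A\times B)^H$, and under this identification $X\times Y$ is closed and $H$-invariant, hence is genuinely a subshift of $(A\times B)^H$. Moreover, for any $S\subseteq H$ and any $(x,y)\in X\times Y$, the restriction $(x,y)|_S$ is the same piece of data as the pair $\bigl(x|_S,\,y|_S\bigr)$; in particular two points of $X\times Y$ agree on $S$ if and only if their first coordinates agree on $S$ and their second coordinates agree on $S$.

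Next I would observe the elementary monotonicity of apartness. If $S_0,S_1\subseteq H$ are $(D_X\cup D_Y)$-apart, i.e.\ $(D_X\cup D_Y)S_0\cap (D_X\cup D_Y)S_1=\emptyset$, then from $D_X\subseteq D_X\cup D_Y$ we get $D_XS_0\cap D_XS_1\subseteq (D_X\cup D_Y)S_0\cap (D_X\cup D_Y)S_1=\emptyset$, so $S_0,S_1$ are $D_X$-apart; symmetrically they are $D_Y$-apart.

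To finish, given $S_0,S_1\subseteq H$ which are $(D_X\cup D_Y)$-apart and points $(x_0,y_0),(x_1,y_1)\in X\times Y$, I would apply $D_X$-irreducibility of $X$ to the $D_X$-apart pair $S_0,S_1$ and the points $x_0,x_1\in X$ to obtain $z\in X$ with $z|_{S_i}=x_i|_{S_i}$ for each $i<2$, and separately apply $D_Y$-irreducibility of $Y$ to $S_0,S_1$ and $y_0,y_1\in Y$ to obtain $w\in Y$ with $w|_{S_i}=y_i|_{S_i}$ for each $i<2$. Then $(z,w)\in X\times Y$, and by the first paragraph $(z,w)$ blends $(x_0,y_0)|_{S_0}$ and $(x_1,y_1)|_{S_1}$, establishing $(D_X\cup D_Y)$-irreducibility.

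There is essentially no obstacle here; the statement is a bookkeeping lemma. The only point requiring a moment's care is that ``$D$-apartness'' of a pair of sets is used only through its monotone behavior under shrinking $D$, which is immediate from the definition, so that a single blend on each coordinate, performed with the respective constants $D_X$ and $D_Y$, suffices.
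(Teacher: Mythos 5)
Your argument is correct and is exactly the intended one: the paper states this as a \emph{Fact} without proof precisely because the coordinatewise blending you describe, together with the observation that $(D_X\cup D_Y)$-apartness implies both $D_X$- and $D_Y$-apartness, is the whole content. Nothing further is needed.
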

\vspace{2 mm}

The remainder of this section discusses some examples of strongly irreducible flows that we will use in the construction of the next section. 

Let $C\subseteq H$ be finite, and let $n = |C^{-1}C|$. By a greedy argument, there is a partition of $H$ into $n$-many $C$-spaced sets. Even better, given any $S\subseteq H$ and $\delta\colon S\to n$ such that $\delta^{-1}(k)$ is $C$-spaced for each $k< n$, we can extend $\delta$ to some $\gamma\colon H\to n$ such that $\gamma^{-1}(k)$ is $C$-spaced for each $k< n$. We set
$$\mathrm{Part}(C, n) := \{\gamma\in n^H: \gamma^{-1}(k)\text{ is $C$-spaced for each $k< n$}\}$$
and note that $\mathrm{Part}(C, n)$ is $C$-irreducible.

Now let $C, D\subseteq H$ be finite, and suppose $X\subseteq A^H$ is $D$-irreducible, and fix $\alpha\in P_C(X)$. Suppose $S\subseteq H$ is $DC$-spaced. Then by repeatedly using $D$-irreducibility, we can find $x\in X$ such that $hx|_C = \alpha$ for each $h\in S$. Letting $N = |(DC)^{-1}DC|$, we then set
\begin{align*}
\mathrm{Print}(X, \alpha, N) :=& \{(x_0,...,x_{N-1})\in X^N: \exists \gamma\in \mathrm{Part}(DC, N)\,\, \forall h\in H\,\, hx_{\gamma(h)}|_C = \alpha\}\\[1 mm]
\subseteq& \{(x_0,...,x_{N-1})\in X^N: \forall h\in H\,\, \exists i< N\,\, hx_i|_C = \alpha\}
\end{align*}

\begin{prop}
	\label{Prop:PrintStrIrred}
	$\mathrm{Print}(X, \alpha, N)$ is $DC(DC)^{-1}D$-irreducible.
\end{prop}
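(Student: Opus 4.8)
The plan is to show directly that $\mathrm{Print}(X,\alpha,N)$ is $DC(DC)^{-1}D$-irreducible by taking two sets $S_0, S_1 \subseteq H$ that are $DC(DC)^{-1}D$-apart together with elements $\vec{x}^0 = (x^0_0,\dots,x^0_{N-1})$ and $\vec{x}^1 = (x^1_0,\dots,x^1_{N-1})$ of $\mathrm{Print}(X,\alpha,N)$, and producing $\vec{y} = (y_0,\dots,y_{N-1}) \in \mathrm{Print}(X,\alpha,N)$ with $\vec{y}|_{S_i} = \vec{x}^i|_{S_i}$ for $i < 2$. (Here I interpret $\vec{x}|_S$ coordinatewise: $y_j|_S = x_j|_S$ for every $j < N$.) Each $\vec{x}^i$ comes with a witnessing partition $\gamma^i \in \mathrm{Part}(DC,N)$ such that $hx^i_{\gamma^i(h)}|_C = \alpha$ for all $h \in H$.

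The key steps, in order, are as follows. First I would build the witnessing partition $\gamma$ for $\vec{y}$. Define $\delta$ on $DS_0 \cup DS_1$ by $\delta = \gamma^i$ on $DS_i$; this is well-defined because $S_0, S_1$ are far enough apart that $DS_0$ and $DS_1$ are disjoint (indeed $DC(DC)^{-1}D$-apartness of $S_0,S_1$ is far more than enough to force $DS_0 \cap DS_1 = \emptyset$). One checks that each fiber $\delta^{-1}(k)$ is $DC$-spaced: within $DS_i$ it is a subset of $(\gamma^i)^{-1}(k)$, hence $DC$-spaced, and the cross terms are controlled by the apartness hypothesis — two points $h_0 \in DS_0$, $h_1 \in DS_1$ satisfy $DC h_0 \cap DC h_1 = \emptyset$ since $S_0,S_1$ are $DC(DC)^{-1}D$-apart (this is exactly why that particular spacing set appears). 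By the extension property of $\mathrm{Part}(DC,N)$ noted just before the proposition, extend $\delta$ to $\gamma \in \mathrm{Part}(DC,N)$.

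Second, for each fixed coordinate $j < N$ I would produce $y_j \in X$. The constraints on $y_j$ are: $y_j|_{S_0} = x^0_j|_{S_0}$, $y_j|_{S_1} = x^1_j|_{S_1}$, and $hy_j|_C = \alpha$ for every $h \in \gamma^{-1}(j)$. The set $\gamma^{-1}(j)$ is $DC$-spaced, so by repeated $D$-irreducibility of $X$ (exactly the argument used to define $\mathrm{Print}$) there is some $z_j \in X$ with $hz_j|_C = \alpha$ for all $h \in \gamma^{-1}(j)$; moreover on $DS_i \cap \gamma^{-1}(j) \subseteq (\gamma^i)^{-1}(j)$ the pattern $\alpha$ is already the pattern read off from $x^i_j$, so $z_j$ agrees with $x^i_j$ there. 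Now I want to further correct $z_j$ to agree with $x^0_j$ on all of $S_0$ and with $x^1_j$ on all of $S_1$. The region where $z_j$ is already pinned down is $P_j := \gamma^{-1}(j)C$ (the $C$-translates forced to equal $\alpha$), but $P_j \cap S_i$ is contained in $x^i_j$'s pattern anyway, so I apply $D$-irreducibility (three-way, or iteratively) to blend $x^0_j|_{S_0 \cup (P_j\cap S_0... )}$ — more cleanly: the three sets $S_0 \setminus P_j$, $S_1 \setminus P_j$, and $P_j$ need to be pairwise $D$-apart, and since $S_0,S_1$ are $DC(DC)^{-1}D$-apart and $P_j$ is a union of $C$-translates of a $DC$-spaced set, all the relevant pairs are $D$-apart; blending $x^0_j, x^1_j, z_j$ along these three regions gives $y_j \in X$ with the desired properties. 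One then checks $hy_j|_C = \alpha$ for $h \in \gamma^{-1}(j)$ (it agrees with $z_j$ on $P_j \supseteq \gamma^{-1}(j)C$), so $\vec{y} = (y_0,\dots,y_{N-1})$ lies in $\mathrm{Print}(X,\alpha,N)$ via $\gamma$, and $\vec{y}|_{S_i} = \vec{x}^i|_{S_i}$.

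The main obstacle is bookkeeping the spacing sets: one must verify that the constant $DC(DC)^{-1}D$ is exactly what makes the three regions $S_0\setminus P_j$, $S_1\setminus P_j$, $P_j$ pairwise $D$-apart while also making $\delta^{-1}(k)$ come out $DC$-spaced, so that the extension property and $D$-irreducibility of $X$ can both be invoked. The factor $(DC)^{-1}$ accounts for the fact that the "pinned" region $P_j$ is itself a $C$-neighborhood of a $DC$-spaced set, and the outer $D$'s on both sides give the slack needed for $D$-irreducibility of $X$ after translating by elements of $C$. I expect the verification to be a routine but careful chain of inclusions; no genuinely new idea is needed beyond the three observations already recorded in the text (extendability of $\mathrm{Part}$, repeated $D$-irreducibility to print $\alpha$ on a $DC$-spaced set, and the product/blending properties of irreducible subshifts).
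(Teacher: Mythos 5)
Your overall strategy --- glue the witnessing partitions near $S_0$ and $S_1$, extend to some $\gamma\in\mathrm{Part}(DC,N)$, then blend coordinatewise --- is the same as the paper's, but there is a genuine gap in the quantitative step: the collar on which you force $\gamma$ to agree with $\gamma^i$ is too small. You glue on $DS_i$, but the relevant collar is $(DC)^{-1}DS_i$: a translate $Ch$ fails to be $D$-apart from $S_i$ exactly when $h\in (DC)^{-1}DS_i$, and nothing forces such an $h$ to lie in $DS_i$ (no relation between $C^{-1}$, or $(DC)^{-1}D$, and $D$ is assumed). Consequently the extension $\gamma$ may assign $\gamma(h)=j\neq\gamma^i(h)$ to some $h$ with $DCh\cap DS_i\neq\emptyset$, and then your final step breaks in two ways. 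First, the claimed pairwise $D$-apartness of $S_0\setminus P_j$, $S_1\setminus P_j$, $P_j$ is false: $P_j=C\gamma^{-1}(j)$ contains $Ch$ for such an $h$, which is not $D$-apart from $S_i$ and hence in general not from $S_i\setminus P_j$; the fact that $P_j$ is a union of $C$-translates of a $DC$-spaced set only separates those translates from \emph{each other}, not from $S_i$. Second, and worse, if $Ch$ actually meets $S_i$ (possible, since $Ch\cap S_i\neq\emptyset$ only forces $h\in C^{-1}S_i$, not $h\in DS_i$) while $\gamma^i(h)\neq j$, the constraints ``$y_j|_{S_i}=x^i_j|_{S_i}$'' and ``$hy_j|_C=\alpha$'' can be outright inconsistent, so no blending argument of any kind can produce $y_j$ for your chosen $\gamma$.

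The repair is exactly the paper's proof. Restrict $\gamma^0,\gamma^1$ to the collars $(DC)^{-1}DS_0$ and $(DC)^{-1}DS_1$; these are $DC$-apart precisely because $S_0,S_1$ are $DC(DC)^{-1}D$-apart, so $DC$-irreducibility of $\mathrm{Part}(DC,N)$ blends them into $\gamma$. Then, instead of cutting $P_j$ out of $S_i$, enlarge the copy regions: let $S_i(j)$ be $S_i$ together with all translates $Ch$ for $h$ in the collar with $\gamma^i(h)=j$, so $S_i(j)\subseteq C(DC)^{-1}DS_i$ and $S_0(j)$, $S_1(j)$ are still $D$-apart by the hypothesis. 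On the added translates $x^i_j$ already displays $\alpha$, so consistency is automatic; and for any $h\in\gamma^{-1}(j)$ outside the collars, $Ch$ is $D$-apart from $S_0(j)\cup S_1(j)$ (from the definition of the collar for the original $S_i$, and from the $DC$-spacing of $\gamma^{-1}(j)$ for the added translates, since those came from fiber $j$ as well). One application of repeated $D$-irreducibility per coordinate then produces $y_j$ agreeing with $x^0_j$ on $S_0(j)$, with $x^1_j$ on $S_1(j)$, and displaying $\alpha$ on $\gamma^{-1}(j)$. A smaller issue in your write-up: the cross-fiber check that $\delta^{-1}(k)$ is $DC$-spaced needs $DCDS_0\cap DCDS_1=\emptyset$, which does not follow from the hypothesis without extra assumptions (e.g.\ $1_H\in C\cap D$); with the larger collars it follows directly, since $DCh\subseteq DC(DC)^{-1}DS_i$ whenever $h\in(DC)^{-1}DS_i$.
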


\begin{proof}
	Let $(x_0,...,x_{N-1}), (y_0,...,y_{N-1})\in \mathrm{Print}(X, \alpha, N)$ as witnessed by $\gamma_x, \gamma_y\in \mathrm{Part}(DC, N)$. Let $S_x, S_y\subseteq H$ be $DC(DC)^{-1}D$-apart. For each $i < N$, we enlarge $S_x$ to a set $S_x(i)\subseteq C(DC)^{-1}DS_x$ by adding in $Ch$ if we have both:
	\begin{enumerate}
		\item 
		$\gamma_x(h) = i$,
		\item 
		$Ch$ and $S_x$ are not $D$-apart, i.e.\ if $h\in (DC)^{-1}DS_x$.
	\end{enumerate}
	Notice that for each $h\in (DC)^{-1}DS_x$, we add $Ch$ to $S_x(\gamma_x(h))$. Do the same thing for $y$.
	
	Since $\mathrm{Part}(DC, N)$ is $DC$-irreducible, we can find $\gamma\in \mathrm{Part}(DC, N)$ blending $\gamma_x|_{(DC)^{-1}DS_x}$ and  $\gamma_y|_{(DC)^{-1}DS_y}$. Notice that if $h\not\in (DC)^{-1}D(S_x\cup S_y)$ and $\gamma(h) = i$, then $Ch$ and $S_x(i)\cup S_y(i)$ are $D$-apart; this is because in forming $S_x(i)$ and $S_y(i)$, we only added sets which were $D$-apart from $Ch$. Now for each $i< N$, find $z_i\in X$ which blends $x_i|_{S_x(i)}$ and $y_i|_{S_y(i)}$ and with $hz_i|_C = \alpha$ whenever $\gamma(h) = i$. Then $(z_0,...,z_{N-1})\in \mathrm{Print}(X, \alpha, N)$ is as desired.
\end{proof}

\section{The construction}

In this section, we construct a $(G\times H)$-subshift $X\subseteq 2^{G\times H}$ which is essentially free (in fact free) as an $H$-flow and minimal as a $G$-flow. This will prove Theorem~\ref{Thm:Main}. We will often think of $2^{G\times H}$ as either the $G$-flow $(2^H)^G$ or as the $H$-flow $(2^G)^H$ as needed. We will first construct an $H$-flow $Y = \varprojlim Y_n \subseteq 2^{G\times H}$. Then we will set $X = \overline{G\cdot Y}$. The main work in this section is the construction of $Y$. 

We start by fixing both an exhaustion $G = \bigcup_n A_n$ as in section 1 and a coherent sequence $\vec{\mathcal{S}}$ on $G$. We will adhere to the notation developed in section 1 as much as possible. We will often assume that each $A_{n+1}$ is suitably large compared to $A_n$ to proceed as we need, especially in regards to item 2(d) of Definition~\ref{Def:Blueprint}. For each $n< \omega$, the $H$-flow $Y_n$ will be a subshift of $(2^{A_n})^H$, and for $m<n$, the projection $\pi^n_m\colon Y_n\to Y_m$ will be the one induced by the restriction map from $2^{A_n}$ to $2^{A_m}$. We also fix an exhaustion $H = \bigcup_n C_n$ with each $C_n$ finite, symmetric, and containing the identity $1_H\in H$.

One helpful definition will be the following. 
\vspace{2 mm}

\begin{defin}\mbox{}
	\vspace{-3 mm}
	
	\begin{enumerate}
		\item 
		Suppose $A\subseteq G$ and $\alpha\in 2^A$. Given $g\in G$, we let $g\cdot \alpha\in 2^{Ag^{-1}}$ be defined by $g\cdot \alpha(ag^{-1}) = \alpha(a)$ for $a\in A$. Note that $(g_0g_1)\cdot \alpha = g_0\cdot (g_1\cdot \alpha)$.
		\item 
		Suppose $A\subseteq G$ is finite and $z\in (2^A)^H$. Then for any $g\in G$, we define $g\cdot z\in (2^{Ag})^H$ where for $z\in Z$ and $h\in H$, we have $(gz)(h) = g\cdot (z(h))$. Again, note that $(g_0g_1)\cdot z = g_0\cdot (g_1\cdot z)$.
		\item 
		Note that if $Z\subseteq (2^A)^H$ is a subshift, then $g\cdot Z\subseteq (2^{Ag^{-1}})^H$ is also a subshift.
	\end{enumerate}
\end{defin}
\vspace{2 mm}

We build the flows $Y_n$ by induction, and we set $Y_0 = (2^{A_0})^H$. Trivially, $Y_0$ is $\{1_H\}:= D_0$-irreducible. Suppose $Y_0,...,Y_{n-1}$ have been constructed and are all $D_{n-1}$-irreducible for some finite symmetric $D_{n-1}\subseteq H$. For each $k< n$, set 
$$S_n(k) = \mathcal{S}_n(k)\setminus \left(\bigcup_{k< m< n} A_m\cdot \mathcal{S}_n(m)\right)$$
For $k< n$, we set $T_n(k) = A_k\cdot S_n(k)$. We also set $T_n(n) := A_n\setminus \bigcup_{k< n} T_n(k)$. To define $Y_n$, we will first define a subshift $Z_n\subseteq (2^{T_n(n-1)})^H$. We will then put
$$Y_n := (2^{T_n(n)})^H\times Z_n\times \prod_{k = 0}^{n-2} \prod_{g\in S_n(k)} g^{-1}Y_k$$.
We note that $Y_n$ will be strongly irreducible as long as $Z_n$ is. 

We set $|S_n(n-1)| := r$. How large does $r$ need to be? Consider the set $P_{C_{n-1}}(Y_{n-1}) := \{\alpha_0,...,\alpha_{\ell-1}\}\subseteq (2^{A_{n-1}})^{C_{n-1}}$. We will want to ensure that 
$$r > |(D_{n-1}C_{n-1})^{-1}(D_{n-1}C_{n-1})|\cdot 2^{|A_{n-1}\times C_{n-1}|}.$$ 
The size of $r$ allows us to find disjoint sets $F_i\subseteq S_n(n-1)$ for each $i< \ell$ with $|F_i| = |(D_{n-1}C_{n-1})^{-1}(D_{n-1}C_{n-1})|:= q$, while ensuring that $F_\ell := S_n(n-1)\setminus \bigcup_{i < \ell} F_i \neq \emptyset$. We also demand that $1_G\in F_\ell$. For $i< \ell$, write $F_i = \{g_0^i,...,g_{q-1}^i\}$.

Recall the flow $\mathrm{Print}$ from the previous section. We define a map
$$\Phi_i\colon \mathrm{Print}((2^{A_{n-1}})^H, \alpha_i, q)\to (2^{A_{n-1}\cdot F_i})^H$$
via $\Phi_i((x_0,...,x_{q-1})) = ((g_0^i)^{-1}x_0,...,(g_{q-1}^i)^{-1}x_{q-1})$. Note that $\Phi_i$ is injective. We set $Q_i = \im{\Phi_i}$.

We then set 
$$Z_n = \prod_{g\in F_\ell}g^{-1}Y_{n-1}\times \prod_{i< \ell} Q_i.$$ Note that $Z_n$ is strongly irreducible; hence $Y_n$ is as well. Notice that since $1_G\in F_\ell$, we have that $Y_n|_{A_{n-1}\times H} = Y_{n-1}$.
\vspace{3 mm}

This concludes the construction of $Y$. We now set $X = \overline{G\cdot Y}$.
\vspace{2 mm}

\begin{prop}
	\label{Prop:ConstructionMinimal}
	$X$ is essentially free as an $H$-flow and minimal as a $G$-flow.
\end{prop}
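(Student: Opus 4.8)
The plan is to establish $G$-minimality first and then deduce freeness of the $H$-action from it.

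\smallskip
\emph{$G$-minimality.} Since the windows $A_m\times C_m$ exhaust $G\times H$, it is enough to show that for every $x\in X$, every $y_0\in Y$, and every $m$ there is $g\in G$ with $(g\cdot x)|_{A_m\times C_m}=y_0|_{A_m\times C_m}$: this says $Y\subseteq\overline{Gx}$, and since $\overline{Gx}$ is closed and $G$-invariant while $X=\overline{GY}$, it forces $\overline{Gx}=X$. I would in fact prove the uniform statement that for each $m$ there is a finite $E(m)\subseteq G$ such that $\{g:(g\cdot y)|_{A_m\times C_m}=y_0|_{A_m\times C_m}\}$ is $E(m)$-syndetic for all $y,y_0\in Y$; the case of a general $x\in X$ then follows formally, by choosing for a given $g_1\in G$ a translate $g'\cdot y'$ of a point of $Y$ which agrees with $x$ on the finite window $(A_mE(m)g_1)\times C_m$ and transferring the syndeticity statement from $y'$ to $x$. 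The key first observation is that $y_0|_{A_m\times C_m}=\pi_m(y_0)|_{C_m}$ lies in $P_{C_m}(Y_m)$, hence equals one of the patterns $\alpha_i\in P_{C_m}(Y_m)$ around which the stage-$(m+1)$ construction is built.

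\smallskip
So the heart of the matter is to show that this pattern $\beta:=\alpha_i$ is realized inside every $y\in Y$ at an $E(m)$-syndetic set of $G$-positions, all at $H$-coordinate $1_H$. Locally this is exactly what $\mathrm{Print}$ delivers: for $y\in Y$ the $Q_i$-coordinate of $\pi_{m+1}(y)\in Y_{m+1}$ is $\Phi_i$ of a tuple in $\mathrm{Print}((2^{A_m})^H,\alpha_i,q)$, and evaluating the defining condition of $\mathrm{Print}$ at $h=1_H$ yields an index $j_0$ with $x_{j_0}|_{C_m}=\alpha_i$; unwinding $\Phi_i$ then gives $(g_{j_0}^i\cdot y)|_{A_m\times C_m}=\beta$ with $g_{j_0}^i\in\mathcal S_{m+1}(m)\subseteq A_{m+1}$. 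To globalize, I would use the blueprint: by coherence and uniformity of $\vec{\mathcal S}$ (properties 2(a) and 2(c) of Definition~\ref{Def:Blueprint}), a syndetic set of vertices $v\in\vec{\mathcal S}(m+1)$ has the property that the content of $y$ on $A_{m+1}v\times H$ is $v^{-1}$ times a point of $Y_{m+1}$, so the local argument applies there and realizes $\beta$ at the position $g_{j_0}^i(v)\cdot v$; since $g_{j_0}^i(v)$ stays in the bounded set $\mathcal S_{m+1}(m)$ while $\vec{\mathcal S}(m+1)$ is $A_{m+1}^5$-syndetic, the collection of these positions is $E(m)$-syndetic. The step I expect to be the main obstacle is precisely this last point — verifying that a syndetic family of level-$(m+1)$ blueprint vertices really does carry clean copies of $Y_{m+1}$ inside the inverse limit $Y$, rather than being absorbed into the printing cells $F_i$ of some higher stage — and it is here that the coherence of $\vec{\mathcal S}$ does the real work.

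\smallskip
\emph{Freeness of the $H$-action.} Fix $h\in H\setminus\{1_H\}$. The set $\{x\in X:h\cdot x=x\}$ is closed, and since the $G$- and $H$-shifts on $2^{G\times H}$ commute it is $G$-invariant; by the $G$-minimality just proved it is therefore either empty or all of $X$, so it suffices to produce a single $y\in X$ with $h\cdot y\neq y$. I would take $y$ inside $Y$. All bonding maps $Y_n\to Y_{n-1}$ are surjective — indeed $Y_n|_{A_{n-1}\times H}=Y_{n-1}$, since $1_G\in F_\ell$ at every stage — so the $H$-equivariant restriction $\pi_0\colon Y\to Y_0=(2^{A_0})^H$ is surjective. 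As $(2^{A_0})^H$ is a full shift over an alphabet of size at least $2$, a routine finite-extension argument (enumerate $H\setminus\{1_H\}$ and at each step add a coordinate on which to break the relevant periodicity) produces $z\in(2^{A_0})^H$ with trivial $H$-stabilizer; lifting $z$ to $y\in Y$ and using equivariance of $\pi_0$ gives $h\cdot y\neq y$. Hence $\{x\in X:h\cdot x=x\}=\emptyset$ for every $h\neq 1_H$, i.e.\ $X$ is free — a fortiori essentially free — as an $H$-flow.
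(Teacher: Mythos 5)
Your outline follows the paper's own route for minimality: reduce to a uniform syndeticity statement for visiting sets of points of $Y$, use the $Q_i$-factor of $Y_{m+1}$ together with the defining condition of $\mathrm{Print}$ at $h=1_H$ to realize the pattern $\alpha_i$ on the window $A_m\times C_m$ at a position of the form $g^i_{j_0}\cdot v$ with $g^i_{j_0}\in\mathcal{S}_{m+1}(m)\subseteq A_{m+1}$, and then let the $A_{m+1}^5$-syndeticity of $\vec{\mathcal{S}}(m+1)$ do the counting; the reduction to points of $Y$ and the transfer to general points of $X$ are also as in the paper (there it is compressed into the remark that it suffices to treat $x,y\in Y$, the uniformity being carried by the explicit constant $A_n^7$). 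Your freeness argument is a mild reshuffling of the paper's: the paper proves essential freeness directly from $Y|_{A_0\times H}$ being the full shift $(2^{A_0})^H$, with no appeal to minimality, and then notes that freeness follows because each $h\in H$ acts as an automorphism of a minimal flow; your version instead needs minimality first, which is logically fine but makes freeness contingent on the part discussed next.

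The genuine gap is exactly the step you flag and then leave to ``coherence'': the claim that for every $y\in Y$ there is an $E(m)$-syndetic set of $v\in\vec{\mathcal{S}}(m+1)$ with $v\cdot y|_{A_{m+1}\times H}\in Y_{m+1}$. This is the load-bearing assertion of the whole argument --- in the paper it appears as ``pick any $g\in\vec{\mathcal{S}}(n)$; then $g\cdot x|_{A_n\times H}\in Y_n$'' --- and it is not a formality. The stage-$N$ definition of $Y_N$ installs genuine copies of lower flows only at the unabsorbed positions $S_N(k)$ (for $k\leq N-2$) and at the $F_\ell$-cells of $S_N(N-1)$; by uniformity (property 2(c) of Definition~\ref{Def:Blueprint}), each printing block $A_{N-1}\cdot h$ with $h\in F_i$, $i<\ell$, at a later stage $N$ contains a full translate $\mathcal{S}_{N-1}(m+1)\cdot h$ of level-$(m+1)$ blueprint vertices, and for those vertices the stage-$N$ constraint imposes only the $\mathrm{Print}$ condition on the block, not membership of $v\cdot y|_{A_{m+1}\times H}$ in $Y_{m+1}$. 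So the assertion is not automatic for all blueprint vertices, and since the printing blocks have size comparable to $A_{N-1}$ for arbitrarily large $N$, it is also not obvious that the vertices whose entire absorption chain passes only through $Y$-carrying cells (the $F_\ell$-cells, using $1_G\in F_\ell$, and the unabsorbed positions) form a syndetic set with constant depending only on $m$. You must supply this analysis --- an induction along the blueprint using properties 2(b), 2(c) and the structure of the $S_N(k)$, identifying precisely which vertices carry copies of $Y_{m+1}$ and why they are $E(m)$-syndetic, or else a separate argument inside printing blocks (for instance exploiting that the pattern printed at $1_H$ is itself a $Y_{N-1}$-pattern on $A_{N-1}\times C_{N-1}$ into which one can recurse). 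Without it the bound ``$E(m)$-syndetic'' is unsupported, and with it falls the minimality proof and, in your ordering, the freeness proof as well.
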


\begin{rem}
	Note that this immediately imples that $X$ is in fact free as an $H$-flow, since each $h\in H$ acts as an automorphism of the minimal $G$-flow $X$.
\end{rem}

\begin{proof}
	We note that each $Y_n$ is essentially free, since $Y_n|_{A_0\times H} = Y_0 = (2^{A_0})^H$. Hence $Y$ is essentially free, from which it follows that $\overline{G\cdot Y}$ is essentially free as an $H$-flow. 
	
	To show that $X$ is $G$-minimal, it suffices to show for any $x, y\in Y$ and any open $V\ni y$ that the \emph{visiting set} $\mathrm{Vis}(x, V):= \{g\in G: gx\in V\}$ is syndetic. We may assume that $V = \{z\in Y: z|_{A_{n-1}\times C_{n-1}} = y|_{A_{n-1}\times C_{n-1}} = \alpha_i\}$. Pick any $g\in \vec{\mathcal{S}}(n)$. Then $g\cdot x|_{A_n\times H} \in Y_n$. It follows that $g\cdot x|_{A_{n-1}\cdot F_i\times H}\in Q_i$. By considering $1_H\in H$ in the definition of $\mathrm{Print}$, there is $j< q$ with $g_j^i\cdot g\cdot x|_{A_{n-1}\times C_{n-1}} = \alpha_i$. It follows that $g_j^i\cdot g\in \mathrm{Vis}(x, V)$. Since $g$ was an arbitrary element of $\vec{\mathcal{S}}(n)$, an $A_n^5$-syndetic set, and since $F_i\subseteq \mathcal{S}_n(n-1)\subseteq A_n$, we see that $\mathrm{Vis}(x, V)$ is $A_n^7$-syndetic as desired. 
\end{proof}
\vspace{2 mm}

One drawback of the techniques used in this paper is the asymmetry between the roles of $G$ and $H$. For example, the following ``symmetric'' version of the result remains open.

\begin{que}
	Let $G$ and $H$ be countable infinite groups. Is there a free $(G\times H)$-flow which is simultaneously a minimal $G$-flow and a minimal $H$-flow?
\end{que}

\vspace{5 mm}
\small

\noindent
Institut de Math\'ematiques de Jussieu--PRG

\noindent
Universit\'e Paris Diderot, case 7012 

\noindent
8 place Aur\'elie Nemours 

\noindent
75205 Paris cedex 13, France.
\vspace{3 mm}

\noindent
Email address: \url{andrew.zucker@imj-prg.fr}


\begin{thebibliography}{9}
	\bibitem{BLR}
	M. Boyle, D. Lind, and D. Rudolph, The automorphism group of a shift of finite type. \emph{Trans. Amer. Math. Soc.}, \textbf{306(1)} (1988), 71--114.

	\bibitem{CK}
	V. Cyr and B. Kra, The automorphism group of a shift of subquadratic growth, \emph{Proc. Amer. Math. Soc.}, \textbf{144(2)} (2016), 613--621.
	\bibitem{CP}
	M. I. Cortez and S. Petite, On the centralizers of minimal aperiodic actions on the Cantor set, preprint, arXiv:1807.04654.
	\bibitem{DDMP}
	S. Donoso , F. Durand, A. Maass, and S. Petite, On automorphism groups of low complexity subshifts, \emph{Ergodic Theory and Dynam. Systems}, \textbf{36} (2016), 64--95.
	\bibitem{FST}
	J. Frisch, T. Schlank, and O. Tamuz, Normal amenable subgroups of the automorphism group of the full shift, \emph{Ergodic Theory and Dynamical Systems}, \textbf{39(5)} (2019), 1290--1298.
	\bibitem{GJS}
	S.\ Gao, S.\ Jackson, and B.\ Seward, \emph{Group Colorings and Bernoulli Subflows}, Memoirs of the American Mathematical Society, Vol.\ 241, No. 1141 (2 of 4), May 2016.
	\bibitem{GTWZ}
	E.\ Glasner, T.\ Tsankov, B.\ Weiss, and A.\ Zucker, Bernoulli disjointness, \emph{submitted}, https://arxiv.org/abs/1901.03406.
	\bibitem{Hed}
	G. A. Hedlund, Endomorphisms and automorphisms of the shift dynamical system. \emph{Math. Systems Theory}, \textbf{3} (1969), 320--375.
	\bibitem{HM}
	 G. Hjorth and M. Molberg, Free continuous actions on zero-dimensional
	spaces, \emph{Topol. Appl.}, \textbf{153(7)} (2006), 1116--1131.
\end{thebibliography}
\end{document}